\theoremstyle{plain}
\newtheorem{theorem*}{Theorem}
\newtheorem*{lemma*} {Lemma}
\newtheorem{corollary*}[theorem*]{Corollary}
\newtheorem{proposition*}[theorem*]{Proposition}
\newtheorem*{conjecture*}{Conjecture}
\newtheorem{theorem}{Theorem}[section]
\newtheorem*{theorem1*}{Theorem 1}
\newtheorem*{theorem2*}{Theorem 2}
\newtheorem*{theorem3*}{Theorem 3}
\newtheorem*{theorem4*}{Theorem 4}
\newtheorem*{theorem5*}{Theorem 5}
\newtheorem*{theorem6*}{Theorem 6}
\newtheorem*{corollary1*}{Corollary 1}
\newtheorem*{corollary2*}{Corollary 2}
\newtheorem*{corollary3*}{Corollary 3}
\newtheorem*{corollary4*}{Corollary 4}
\newtheorem*{corollary5*}{Corollary 5}
\newtheorem*{corollary6*}{Corollary 6}
\theoremstyle{remark}
\newtheorem*{remark}{Remark}
\newtheorem{example*}{Example}
\theoremstyle{definition}
\def\G{\Gamma}
  \def\F{\Bbb{F}} \def\Z{\Bbb{Z}}  
    \def\bp{\begin{pmatrix}}
 \def\ep{\end{pmatrix}} \def\bn{\begin{enumerate}} 
   \def\en{\end{enumerate}}
\def\ba{\begin{array}} \def\ea{\end{array}}  
     \def\ti{\tilde}
\def\be{\begin{equation}} \def\ee{\end{equation}}
\def\Tor{\mbox{Tor}}
    \def\fr12{\frac{1}{2}} \def\z12{\Z[\fr12]}
\begin{document}

\title[Symplectic $4$--manifolds with $K = 0$ and the Lubotzky alternative]{Symplectic ${\bf 4}$--manifolds with $K = 0$ and the Lubotzky alternative}
\author{Stefan Friedl}
\address{Mathematisches Institut, Universit\"at zu K\"oln, Germany}
\email{sfriedl@gmail.com}
\author{Stefano Vidussi}
\address{Department of Mathematics, University of California,
Riverside, CA 92521, USA} \email{svidussi@math.ucr.edu} \thanks{The second author was partially supported by NSF grant \#0906281.}
\date{\today}

\begin{abstract}
In this paper we use the Lubotzky alternative for finitely generated linear groups to determine  which $4$-manifolds admitting a free circle action can be endowed with a
symplectic structure with trivial canonical class.
\end{abstract}

\maketitle

\section{Introduction and main results}
 The study of symplectic manifolds with $K = 0$ is a  central topic of symplectic geometry. Restricting ourselves to (real) dimension $4$, this problem (as well as the closely connected case of manifolds with Kodaira dimension $0$) has
been studied by many authors (see e.g. \cite{Ge92,Li06,Li06b,Bau08,Us09}).
In this note we will determine  which 4-manifolds admitting a free circle action can be endowed with a
symplectic structure with trivial canonical class.

In order to state our results  we have to introduce some definitions and notation. Let $M$ be a smooth $4$--manifold that admits a free circle action. Denote by $N$ the orbit space of this action, a smooth $3$--manifold. Then we can consider $M$ as the total space of a principal $S^1$-bundle over $N$, determined by the Euler class $e \in H^2(N;\Z)$.
We will refer to the Euler class of this $S^1$-bundle as the Euler class of the free $S^1$-action on $M$.

The following now lists all  examples of symplectic 4-manifolds with $K=0$ which are known to the authors (see also \cite{Li06} and \cite{Ge92}):
\bn
\item the $K3$ surface,
\item $T^2$--bundles over $T^2$,
\item $S^1$-bundles over a $T^2$--bundle $N$.
\en
(Note that the second and third classes of examples overlap.) The $K3$-surface is well-known to be symplectic; torus bundles over a torus where shown by Geiges \cite{Ge92} to be symplectic, and the third type of example
was shown to be symplectic by many authors (see e.g. \cite{Th76,Bou88,FGM91,FV10a}). In all three cases it is well-known that the canonical is zero.
It is reasonable to conjecture that this list is complete.
In this paper we will prove this conjecture for the class of 4-manifolds which admit a free circle action.
More precisely, we have the following result.
\begin{theorem*} \label{lub} Let $(M,\omega)$ be a symplectic manifold with trivial canonical class. If $M$ admits a free circle action with orbit space $N$ and Euler class $e \in H^2(N;\Z)$, then $N$ is a torus bundle over a circle and, perhaps with the exclusion of the case when $b_1(N)=1$ and $e$ is torsion,  $M$ is a torus bundle over a torus.
\end{theorem*}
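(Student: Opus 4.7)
The plan is to translate the hypothesis $K=0$ into strong constraints on the twisted Alexander polynomials of $N$, then to apply the Lubotzky alternative to force $\pi_1(N)$ to be virtually solvable, and finally to identify $N$ and $M$ via $3$-manifold structure theory. For the first step: since $M$ is a principal $S^1$-bundle over $N$, every class $\phi \in H^1(N;\Z)$ pulls back to $H^1(M;\Z)$, and I would combine the condition $c_1(M,\omega)=0$ with Taubes' identification of Seiberg-Witten with Gromov invariants and Baldridge's formula for the Seiberg-Witten invariants of principal $S^1$-bundles over a $3$-manifold. This should yield that, for every finite-dimensional unitary representation $\alpha: \pi_1(N) \to U(k)$ and every $\phi \in H^1(N;\Z)$, the twisted Alexander polynomial $\Delta_{\phi}^{\alpha}(t)$ is monic of degree equal to $k\,\tnphi$, and in addition that the triviality of the canonical class forces $\tnphi = 0$.

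Next I would invoke the Lubotzky alternative. Since $\pi_1(N)$ is virtually linear (by the combined work of Agol, Wise, and Przytycki--Wise), it satisfies the dichotomy: either $\pi_1(N)$ is virtually solvable, or some finite-index subgroup surjects onto a non-abelian free group $F_2$. In the second case, pulling back to the finite cover $\widetilde N \to N$ associated to the $F_2$-quotient, the lifted $S^1$-bundle $\widetilde M \to \widetilde N$ remains symplectic with trivial canonical class; however, the abundance of epimorphisms $\pi_1(\widetilde N) \to F_2 \to \Z/p$ (ranging over primes $p$) should yield twisted Alexander polynomials of $\widetilde N$ whose degrees or leading coefficients violate the monic-degree constraint established in the first step. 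Hence $\pi_1(N)$ must be virtually solvable, which for a closed orientable $3$-manifold (by geometrization) means that $N$ carries one of the geometries $\R^3$, Nil, or Sol, and in each such case $N$ is a $T^2$-bundle over $S^1$.

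To conclude, I would compose the bundle projection $M \to N$ with the fibration $N \to S^1$, obtaining a map $M \to S^1$ whose generic preimage is a closed $3$-manifold carrying a free $S^1$-action with orbit space $T^2$, hence itself a $T^2$-bundle over $S^1$. A bundle-theoretic analysis of the resulting structure should identify $M$ as a $T^2$-bundle over $T^2$, except in the excluded case $b_1(N)=1$ with $e$ torsion, where the Euler class genuinely obstructs assembling the two fibrations into a single torus bundle with torus base. The main difficulty is the middle step: converting the algebraic Lubotzky dichotomy into a concrete contradiction with the Seiberg--Witten constraints requires precise control on how twisted Alexander polynomials transform under passage to finite covers and under free-group quotients, and this is the technical heart of the argument, building on the authors' twisted Alexander polynomial machinery developed in earlier work.
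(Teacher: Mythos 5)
Your first step is where the argument breaks down. When the Euler class $e$ is non-torsion it is precisely \emph{not} known that $K=0$ forces the Thurston norm of $N$ to vanish, nor that the twisted Alexander polynomials $\Delta_{\phi}^{\alpha}(t)$ are monic of degree $k\,\|\phi\|_T$: that package comes from Kronheimer's refined adjunction inequality, which is available only for $M=S^1\times N$ (and, after pulling back along a suitable finite cover, for torsion $e$). Baldridge's formula computes $SW_M$ only on classes in $p^*H^2(N)$, and only as the sum $\sum_{\xi\in (p^*)^{-1}(\kappa)}SW_N(\xi)$; combined with Taubes' constraints this yields a single scalar equation, $\sum_{\xi}SW_N(\xi)=1$, and nothing about degrees or leading coefficients. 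The way to exploit that one equation is via Meng--Taubes together with the fact that the sum of the coefficients of the Alexander polynomial of $N$ vanishes integrally when $b_1(N)>3$ and mod $p$ when $b_1(N;\F_p)>3$; this gives the bounds $vb_1(N)\le 3$ and $vb_1(N;\F_p)\le 3$, which is all the Seiberg--Witten theory actually delivers here. Relatedly, you have misstated the Lubotzky alternative: for a finitely generated linear group the dichotomy is ``virtually soluble, or $vb_1(\,\cdot\,;\F_p)=\infty$ for every prime $p$,'' not ``virtually soluble, or a finite-index subgroup surjects onto $F_2$.'' The largeness dichotomy is false for general linear groups (e.g.\ $\mathrm{SL}_3(\Z)$), and while largeness of hyperbolic $3$-manifold groups is now a theorem, it is far deeper than what is needed; the mod-$p$ homology growth version is exactly what meshes with the mod-$p$ Alexander polynomial constraint above.

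Two further gaps. You cannot apply the alternative to $\pi_1(N)$ wholesale: Nil- and Sol-manifold groups are linear and virtually soluble, so the alternative alone excludes nothing. One must first show $N$ is irreducible (a McCarthy-type argument using residual finiteness), hence Haken since $b_1(N)\ge 2$ (which follows from $b_2^+(M)=b_1(N)-1\ge 1$ via the Gysin sequence), then dispose of the Seifert-fibered and non-trivial JSJ cases by the Kojima--Luecke theorem (a $3$-manifold containing an incompressible torus with $vb_1\le 3$ is virtually, hence actually, a torus bundle), leaving only the hyperbolic case, where linearity is classical and non-solvability follows from Evans--Moser. Finally, the torsion Euler class case requires a separate treatment (there $b_2^+(M)=b_1(N)$, and for $b_1(N)=1$ one faces chamber dependence of $SW$): the efficient route is to pass to the finite cover of $N$ determined by $\Tor(H_1(N))$, over which the pulled-back Euler class vanishes, reducing to the known product case. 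Your concluding assembly of $M$ as a $T^2$-bundle over $T^2$ is fine in spirit once one knows that a torus bundle $N$ with $b_1(N)\ge 2$ is itself an $S^1$-bundle over $T^2$.
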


\begin{remark}
\bn
\item
If $M$ is in fact a product $M=S^1\times N$ (i.e. $e = 0$), then this theorem was first proved in \cite{FV08a} and reproved in greater generality in \cite{FV08b}
(see also \cite{FV10b}).
Note though that both approaches rely heavily on the result, due to Kronheimer \cite{Kr99}, that for manifolds of the form
$S^1 \times N$ there exists a refined adjunction inequality. This inequality  allows one to
constrain the Thurston norm of a $3$--manifold in terms of the canonical class. In particular, if $M=S^1\times N$ is symplectic with $K = 0$ this
constraint translates into the fact that $N$ must have vanishing Thurston norm, in particular $N$ contains incompressible tori. This strong topological information then gives information on the homology of the finite  covers of $S^1\times N$. In \cite{FV08a,FV08b} we showed that this information is compatible with the existence of a symplectic structure with canonical equal to zero only if $N$ is a torus bundle over $S^1$.
\item Bowden \cite{Bow09}  studied symplectic 4-manifolds with a free circle action over a $3$--manifold with vanishing Thurston norm along the lines of \cite{FV08a}. This discussion shows that our main theorem contains Bowden's results
\en
\end{remark}

If $M$  is a  4-manifold with a free circle action \emph{and non-trivial Euler class}, then
 it is not known whether a refined adjunction inequality similar to that established in
\cite{Kr99} holds. We are therefore  forced to gather information by other means. We will succeed in
doing so by extending the approach of Section 2 of \cite{FV08a}, using as new topological
ingredient a consequence of the Lubotzky alternative for finitely generated linear groups.\\

\noindent
\textbf{Note.} The results of this paper previously appeared as part of a preprint by the authors with the title ``Symplectic $4$--manifolds with a free circle action".

%==============================================================
\section{Proofs and discussion}

For sake of convenience, we split the proof of Theorem \ref{lub} in the two cases where the Euler class $e \in H^2(N)$ is nontorsion and, respectively, torsion. We will first treat the former case.
The latter case, which can be reduced via covering theory to the product case treated in \cite{FV08a}, will be discussed subsequently.

\begin{theorem} \label{betti}
Let $(M,\omega)$ be a symplectic manifold with trivial canonical class. If $M$ admits a free circle action with orbit space $N$ and nontorsion Euler class $e \in H^2(N;\Z)$, then $N$ is a torus bundle over a circle and $M$ is a torus bundle over a torus.
\end{theorem}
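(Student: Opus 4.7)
The plan is to adapt the strategy of \cite{FV08a} to the setting of a free circle action with nontorsion Euler class, where Kronheimer's refined adjunction inequality is not available. The first step is to translate the symplectic $K=0$ hypothesis on $M$ into a constraint on the topology of $N$ and its finite covers. Since $M$ is a principal $S^1$-bundle over $N$ with nontorsion Euler class, the Gysin sequence gives $b_1(M)=b_1(N)$, and for every finite cover $\widetilde{N}\to N$ the induced cover $\widetilde{M}\to M$ is again a principal $S^1$-bundle with nontorsion Euler class carrying a pulled-back symplectic form with trivial canonical class. Hence constraints on $\widetilde{M}$ coming from $K=0$ translate directly into constraints on $\widetilde{N}$, and the problem is reduced to controlling the Betti numbers of the finite covers of $N$.

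The new ingredient, as advertised in the introduction, is the Lubotzky alternative applied to $\pi_1(N)$: this finitely generated group is linear (3-manifold groups being linear after Thurston and Agol), so either it is virtually solvable or for every sufficiently large prime $p$ it admits finite-index subgroups that surject onto arbitrarily large non-abelian free pro-$p$ quotients. In the virtually solvable case, geometrization identifies $N$ as a Sol, Nil, or Euclidean $3$-manifold, and one checks that the only such examples compatible with $M$ being symplectic are torus bundles over $S^1$. The bulk of the argument is therefore dedicated to excluding the non-solvable alternative.

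The hardest step is deriving the topological constraint that plays the role of Kronheimer's refined adjunction inequality. In the product setting \cite{FV08a}, $K=0$ forced the Thurston norm of $N$ to vanish, which was the decisive combinatorial input. Here the plan is to work instead with twisted Alexander polynomials $\Delta^{\alpha}_{\phi}(t)$ associated to a class $\phi\in H^1(M;\Z)$ chosen compatibly with $[\omega]$, and exploit the fact that the $K=0$ condition on the Seiberg-Witten side forces these polynomials to satisfy tight degree/norm equalities in every finite cover. If the Lubotzky alternative produced a tower $\widetilde{M}\to M$ with free non-abelian pro-$p$ quotients of $\pi_1(\widetilde{N})$, one could construct finite-dimensional representations $\alpha$ whose twisted Alexander degrees grow beyond what the symplectic constraint allows, yielding the desired contradiction. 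Making this compatibility between Lubotzky's towers and the symplectic constraint quantitatively effective is the main technical obstacle.

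Once $\pi_1(N)$ is shown to be virtually solvable and $N$ identified as a torus bundle over $S^1$, the final step is to upgrade the principal $S^1$-bundle $M\to N$ to a $T^2$-fibration $M\to T^2$. Composing the fibration $N\to S^1$ with the bundle map, the fiber of the composition $M\to S^1$ is a principal $S^1$-bundle over $T^2$, and the nontorsion Euler class controls how the base $S^1$ combines with the $S^1$-fiber of $M\to N$ to produce a $T^2$-base; this last step is essentially algebraic topology and should present no serious difficulty given the structural information already in hand.
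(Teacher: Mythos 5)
There is a genuine gap at precisely the point you flag as ``the main technical obstacle,'' and it is not a technicality that can be deferred: it is the heart of the proof. You need a concrete homological constraint on \emph{all} finite covers of $N$ coming from $K=0$, against which the Lubotzky alternative can bite. The paper obtains this without any twisted Alexander polynomials: Baldridge's formula for the Seiberg--Witten invariants of a circle bundle with nontorsion Euler class gives $SW_M(\kappa)=\sum_{\xi\in(p^*)^{-1}(\kappa)}SW_N(\xi)$, Taubes' constraints for a symplectic manifold with $K=0$ give $SW_M(0)=1$ with $0$ the only basic class in $p^*H^2(N)$, hence $\sum_{\xi}SW_N(\xi)=1$; by Meng--Taubes this is the sum of the coefficients of the (ordinary) Alexander polynomial of $N$, and by Turaev that sum vanishes when $b_1(N)>3$, and vanishes mod $p$ when $b_1(N;\F_p)>3$. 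Applied to every finite cover (where the Euler class stays nontorsion and the canonical class stays zero), this yields $vb_1(N;\Z)\le 3$ and $vb_1(N;\F_p)\le 3$. Your proposed substitute --- degree bounds on twisted Alexander polynomials forced by $K=0$ in every cover --- is not established anywhere in your argument, and no such refined adjunction-type inequality is known in the nontorsion Euler class setting (as the introduction itself points out); so as written the non-solvable branch of the Lubotzky dichotomy is never actually contradicted.

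Two further points. First, you invoke the Lubotzky alternative for $\pi_1(N)$ assuming linearity of all $3$--manifold groups; this is not justified (and was not available), which is why the paper first proves $N$ is irreducible (McCarthy's argument plus residual finiteness), uses $b_1(N)=b_2^+(M)+1\ge 2$ to get $N$ Haken, runs the JSJ trichotomy, handles the Seifert-fibered and non-trivial-JSJ cases via the incompressible torus together with Kojima--Luecke and $vb_1(N)\le 3$, and reserves the Lubotzky alternative for the hyperbolic case alone, where linearity is automatic; there Evans--Moser rules out virtual solvability, so $vb_1(N;\F_p)=\infty$ contradicts the bound above. Second, in the virtually solvable case you should cite Evans--Moser (orientable $3$--manifolds with $b_1>0$ and solvable fundamental group are torus bundles) rather than an unspecified check through Sol/Nil/Euclidean geometries. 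Your final step (upgrading $M$ to a $T^2$-bundle over $T^2$) is essentially right once one notes that a torus bundle $N$ with $b_1(N)\ge 2$ is itself an $S^1$-bundle over $T^2$, but it depends on the bound $b_1(N)\ge 2$, which you never derive.
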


\begin{proof}
We denote the orbit space of the free circle action by $N$ and we denote by $p$ the projection map $M\to N$.
Note that   we have the Gysin sequence
\[ \cdots \to H^{0}(N) \xrightarrow{\cup \hspace*{1pt} e} H^2(N) \xrightarrow{p^{*}}
H^2(M) \xrightarrow{p_{*}} H^{1}(N) \xrightarrow{\cup \hspace*{1pt} e} H^3(N) \to \cdots
\]
where $p_*$ is the map given by `integration along the fiber'.
An easy argument shows that  the Gysin sequence combined with the assumption that $e$ is non-torsion implies that
 $b_{2}^{+}(M) = b_1(N)  - 1$. Since $M$ is symplectic we conclude that $b_1(N)>1$.

We will first show that $N$ is a torus bundle over $S^1$.
Given $R=\Z$ or $R=\F_p$ we define the virtual Betti number to be
 \[ vb_1(N;R) = \mbox{sup}\{b_1({\tilde N},R)\,|\, p \colon {\tilde N} \to N \mbox{ is a finite cover} \}. \]
In particular we write $vb_1(N;R)=\infty$ if there exist finite covers with arbitrarily large first $R$-Betti number.

We will now start by showing that the condition $K = 0$ implies that  $vb_1(N) \leq 3$.
In \cite{Ba03} Baldridge has determined the relation between the Seiberg--Witten invariants of $M$ and $N$:
The Seiberg-Witten invariant $SW_{M}(\kappa)$ of a class $\kappa = p^{*} \xi \in p^*
H^{2}(N) \subset H^{2}(M)$ is given by the following formula, that combines Corollaries 25 and 27 of \cite{Ba03}, \begin{equation} \label{baldfor} SW_{M}(\kappa)
=\sum_{\xi \in (p^*)^{-1}(\kappa)} SW_N(\xi)\in \Z,  \end{equation}
in particular when $b_{2}^{+}(M) = 1$ it is independent on the chamber in which it was calculated. Moreover, if $b_{2}^{+}(M) > 1$, these are the only basic classes. When the canonical class is trivial,
Taubes' constraints (\cite{Ta94,Ta95}) imply that $SW_{M}(0) = 1$ and that  the class $K = 0$ is the only basic class of $M$ (when $b_{2}^{+}(M) = 1$, applying the usual caveats for this case, this is true in $p^*
H^{2}(N)$, and requires the use of Equation \ref{baldfor} and the symmetry of the Seiberg-Witten invariants of $N$).
 We can  then compute  the sum of the coefficients of the Seiberg-Witten invariant of $N$ as
\begin{equation} \label{sum}  \sum_{\xi \in H^2(N)} SW_{N}(\xi) = \sum_{\kappa \in p^* H^2(N)} SW_{M}(\kappa) =SW_M(0)=1. \end{equation}
Now, for all $3$--manifolds with $b_{1}(N) > 1$, the sum of the coefficients of the Seiberg-Witten polynomial equals, by the formula of Meng and Taubes (see \cite{MT96}), the sum of the coefficients of the Alexander polynomial, and the latter vanishes when $b_{1}(N) > 3$ (see \cite[Section II.5.2 and Theorem IX.2.2]{Tu02}).
Equation (\ref{sum}) requires therefore that $b_{1}(N) \leq 3$. Repeating this argument for all
covers of $N$ (for which the Euler class is necessarily nontorsion and the canonical zero) gives the desired bound on $vb_1(N)$.

We want to show that the condition $vb_1(N)\leq 3$ entails that either $N$ is a torus bundle, or $N$ is hyperbolic.
First note that a generalization of an argument of  \cite{McC01} shows that our assumption that $M$ is symplectic implies that $N$ is irreducible. This argument is  described in detail in \cite{Bow09} and it uses the fact that 3-manifold groups are now known to be residually finite (which is a consequence of the proof of the Geometrization Conjecture, we refer to  \cite{He87} for details).
Recall that we had shown that  $b_1(N)>1$; it now follows that  $N$ is Haken.

We can consider then the JSJ decomposition of $N$: we know that either $N$ has a non--trivial JSJ decomposition, or it is Seifert fibered, or it is hyperbolic.

We now prove that for the first two cases the condition $vb_1(N)\leq 3$ allows us to conclude that $N$ is a torus bundle. In fact, $N$ must contain an incompressible torus $T$. (This is obvious if $N$ has a non--trivial JSJ decomposition, but it is true also when $N$ is
Seifert fibered, as Seifert fibered manifolds without incompressible tori must have $b_1 \leq 1$
(cf. \cite[p.~89ff]{Ja80}).)  It now follows from $vb_1(N)\leq 3$ combined with the results of Kojima and Luecke (see \cite[p.~744]{Ko87} or \cite[Theorem~1.1]{Lu88})
 that  there exists a finite cover $p:\ti{N}\to N$ such that $\ti{N}$ is a torus bundle. We claim that this implies that $N$ is also a torus bundle.
 Indeed, a straightforward Thurston norm argument shows that
any non-zero class in $H^1(\ti{N};\Z)$ corresponds to a torus bundle. Since $b_1(N)\geq 2$ we can pick a non-zero class $\phi\in H^1(N;\Z)$.
We see that $(\ti{N},p^*(\phi))$ is a torus bundle and it is well-known that this implies that $(N,\phi)$ is already a torus bundle. (If $N$ is a torus bundle, all its finite covers are torus bundles, so the condition $vb_1(N) \leq 3$ is satisfied.)

We are left therefore with the cases where $N$ is a torus bundle, or an hyperbolic manifold with $vb_{1}(N) \leq 3$. To complete the proof of the Theorem, it remains to show that the latter case can be excluded. It is widely expected (and verified for the
arithmetic case, see \cite{CLR07}) that hyperbolic manifolds with positive Betti number have
$vb_1(N) = \infty$, so it is quite possible that that case is taken care of by the previous result, but even if we lack a general proof of this fact, we will be able to explicitly rule out that case too.

We start by observing that, as consequence of \cite[Section II.5.2 and Theorem IX.2.4]{Tu02}, if the homology group
$H_1(N,\F_p)$ has rank $b_1(N,\F_p) > 3$, then the sum of the coefficients of the Alexander
polynomial vanishes \textit{mod $p$}. Now a consequence of the Lubotzky alternative
(cf. \cite[Corollary~16.4.18]{LS03} and \cite[Theorem 1.3]{La09}) asserts that if $\pi_1(N)$ is a finitely generated linear
group,  then either $\pi_1(N)$ is virtually soluble or, for any prime $p$ we have $vb_1(N,\F_p) = \infty$.
Since $N$ is hyperbolic and orientable, its fundamental group is of course
linear. By \cite[Theorem 4.5]{EM72} the only (orientable) $3$--manifolds with positive Betti
number and soluble fundamental group are torus bundles, whence the first condition cannot occur.
It follows that for any prime $p$, there exists a cover of $N$ (even regular, by \cite[Theorem
5.1]{La09}) whose Alexander polynomial has sum of coefficients that vanishes \textit{mod $p$}.
This entails that the corresponding cover of $M$ violates the combination of Equation
(\ref{sum}) with \cite{MT96}, hence the possibility of a hyperbolic $N$ is excluded.

(If $N$ is a torus bundle, all its finite covers are torus bundles, so the condition $vb_1(N;\F_p) \leq 3$ is satisfied.)

We now showed  that  $M$ is a principal $S^1$-bundle over a 3-manifold $N$ which is a torus bundle and which satisfies $b_1(N)\geq 2$.
Note that  a torus bundle with $b_1(N) \geq 2$ is also an $S^1$--bundle
over $T^2$ (see e.g. \cite{Hat}). It follows that  $M$ is in fact a $T^2$--bundle over $T^2$.
\end{proof}

\begin{remark}
\bn

\item The proof of Theorem \ref{betti} applies, \textit{mutatis mutandis}, to the product case, making
it unnecessary there as here to use Kronheimer's  refined adjunction inequality or Donaldson's theorem on
the existence of symplectic representatives of (sufficiently high multiples of) the dual of
$[\omega]$, that are used instead in \cite{Kr99} and \cite{Vi03}.
\item Note that the statement of Theorem \ref{betti} covers in fact all symplectic manifolds with \textit{torsion} canonical class (a class that \textit{a priori} could be broader,
when $b_{2}^{+}(M) = 1$, than the case of trivial canonical class). In fact, if $b_{2}^{+}(M) = 1$, $b_1(M) = 2$,
and $K$ is torsion, McDuff and Salamon show in \cite{MS95} that $K$ is in fact  trivial. (This can actually be verified, in the case at hand, using Taubes' constraints and the symmetry of $SW_{N}$.)
So Theorem \ref{betti} covers all symplectic manifolds
with Kodaira dimension $0$, in the notation of \cite{Li06} ($M$ is symplectically minimal, as it
is aspherical).
\en
\end{remark}

We will discuss now the case where the Euler class $e \in H^2(N;\Z)$ is torsion.
In principle, we could proceed as in the previous case, using the results of \cite{Ba03} for the relation between the Seiberg-Witten invariants of $M$ and $N$ in case of torsion Euler class. This does not present particular conceptual difficulties but requires, in the case of $b_2^{+}(M) = b_1(N) = 1$, a detailed bookkeeping of the chamber dependence of the Seiberg-Witten invariants for both $M$ and $N$, that would impose on us a somewhat long detour. Instead of following that path, it is simpler to use a straightforward algebro--topological observation to reduce the problem to the product case treated in \cite{FV08a}.
(The same approach was independently taken by Bowden \cite[Proposition~3]{Bow09}.)
We have the following result:

\begin{theorem} \label{tor}
Let $(M,\omega)$ be a symplectic manifold with trivial canonical class. If $M$ admits a free circle action with orbit space $N$ and torsion Euler class $e \in H^2(N;\Z)$, then $N$ is a torus bundle over a circle. Furthermore, if either  $b_1(N)>1$ or $e=0$, then
  $M$ itself is a torus bundle over a torus.
\end{theorem}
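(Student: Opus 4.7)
The strategy is to reduce to the product case treated in \cite{FV08a} by passing to a finite cover of $M$ that is a product. First I would construct a finite regular cover $p\colon \tilde{N}\to N$ on which $p^{*}(e)=0$; since $e$ is torsion in $H^{2}(N;\Z)$, such a cover exists (for instance, any finite cover of $N$ with torsion-free first homology, which is available by residual finiteness of $3$-manifold groups). The pullback of the principal $S^{1}$-bundle $M\to N$ to $\tilde{N}$ is then trivial, so $\tilde{M}:=M\times_{N}\tilde{N}\cong S^{1}\times\tilde{N}$ is a finite cover of $M$, and pulling back $\omega$ gives a symplectic structure on $\tilde{M}$ with trivial canonical class.

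Applying the product case of \cite{FV08a,FV08b} to $(\tilde{M},\tilde{\omega})=(S^{1}\times\tilde{N},\tilde{\omega})$ then shows that $\tilde{N}$ is a torus bundle over $S^{1}$. To descend this conclusion to $N$, the plan is to repeat the Thurston norm argument from the proof of Theorem \ref{betti}, which requires a nonzero class $\phi \in H^{1}(N;\Z)$. Such a class is available because the Gysin sequence (together with $e$ being rationally zero) gives $b_{2}(M)=2b_{1}(N)$ and $b_{1}(M)=b_{1}(N)+1$, while $\chi(M)=0$ (from the free $S^{1}$ action) and $\sigma(M)=0$ (from $K=0$ via $K^{2}=2\chi+3\sigma$) yield $b_{2}^{+}(M)=b_{1}(N)$; the symplectic hypothesis $b_{2}^{+}(M)\geq 1$ then forces $b_{1}(N)\geq 1$. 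Pulling back such a $\phi$ to $\tilde{N}$ and using that every nonzero class on a torus bundle is represented by a torus fibration, the descent argument from the proof of Theorem \ref{betti} shows that $\phi$ itself is represented by a torus fibration on $N$, so $N$ is a torus bundle over $S^{1}$.

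For the final clause: if $e=0$ then $M\cong S^{1}\times N$ is directly a torus bundle over $S^{1}\times S^{1}=T^{2}$. If instead $b_{1}(N)>1$, then, as observed at the end of the proof of Theorem \ref{betti}, $N$ is itself an $S^{1}$-bundle over $T^{2}$, and the free $S^{1}$ action on $N$ (being isotopic to the identity) preserves the Euler class of $M\to N$ and hence lifts to a free $S^{1}$ action on $M$ commuting with the principal action; the resulting free $T^{2}$ action exhibits $M$ as a $T^{2}$-bundle over $T^{2}$. The main obstacle will be the very first step---producing the product cover, which amounts to a routine but slightly delicate application of the cohomological classification of principal circle bundles to kill a torsion Euler class---since the descent from $\tilde{N}$ to $N$ and the identification of $M$ with a $T^{2}$-bundle over $T^{2}$ both reduce to techniques already established in the proof of Theorem \ref{betti}.
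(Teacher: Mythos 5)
Your overall strategy is the same as the paper's: pass to a finite cover of $N$ on which the Euler class pulls back to zero, so that the corresponding cover of $M$ is a product $S^1\times \tilde N$; apply the product case of \cite{FV08a}; and then descend the torus fibration to $N$ using a nonzero class $\phi\in H^1(N;\Z)$. Your Gysin-sequence argument that $b_1(N)\geq 1$ and the descent step are fine and match the paper.

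The one genuine problem is your justification of the trivializing cover. A finite cover with torsion-free first homology would certainly do the job, but residual finiteness of $\pi_1(N)$ does not produce one: it lets you separate a given element from the identity in a finite quotient, and says nothing about removing torsion from $H_1$ of a finite cover (for hyperbolic $3$--manifolds the torsion of $H_1$ in towers of finite covers is in fact expected to grow). Fortunately you need far less than torsion-free $H_1$: you only need to kill the single torsion class $e\in \Ext(H_1(N),\Z)\subset H^2(N;\Z)$. The paper does this by taking $\Gamma\cong\Tor(H_1(N))$, choosing $H_1(N)\to\Gamma$ restricting to an isomorphism on $\Tor(H_1(N))$, and passing to the induced regular cover $\pi\colon N_\Gamma\to N$; a diagram chase shows $\pi_*\colon\Tor(H_1(N_\Gamma))\to\Tor(H_1(N))$ is zero, so by naturality of the universal coefficient sequence the pullback $\Ext(H_1(N),\Z)\to\Ext(H_1(N_\Gamma),\Z)$ is zero and hence $\pi^*e=0$. (Equivalently, write $e$ as a Bockstein of a class in $H^1(N;\Z/n)$ and take the associated cyclic cover.) A smaller remark: for the final clause with $b_1(N)>1$, rather than lifting the circle action on $N$ to a commuting free circle action on $M$ (which is delicate --- preserving the Euler class does not by itself guarantee such a lift), it is cleaner to compose the two fibrations $M\to N\to T^2$: the fiber is a principal $S^1$--bundle over $S^1$, hence $T^2$, which is the argument the paper uses in Theorem \ref{betti}.
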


\begin{proof}
If the Euler class $e \in H^2(N)$ is trivial, $M$ is a product $S^1 \times N$. It was shown in  \cite{FV08a} that $N$ fibers over $S^1$, hence $M$ is a torus bundle over a torus.

 So we now assume that $e \neq 0$, so that in particular $\Tor(H_1(N))$ is
non--trivial.
Denote by $\Gamma$ an abelian group isomorphic to
$\Tor(H_1(N))$, and pick a map $H_1(N) \to \Gamma$ such that the induced map
\[ \mbox{Tor}(H_1(N))\to H_1(N)\to \Gamma \] is an isomorphism. Denote by $\gamma\colon \pi_1(N) \to H_1(N)\to \Gamma$
the corresponding map from the fundamental group of $N$.  We claim that, denoting as usual by $\pi \colon N_{\Gamma} \to N$ the associated regular cover, and by $\pi\colon M_{\Gamma} \to M$ its $4$--dimensional counterpart, $M_{\Gamma}$ is the product $S^1 \times N_{\Gamma}$. In fact, we have the  commutative diagram
\[ \xymatrix{ 1\ar[r]&\pi_1(N_\Gamma)\ar[r]\ar[d]&\pi_1(N)\ar[d]\ar[r]&\Gamma\ar[d]\ar[r]&1\\
&H_1(N_\Gamma)\ar[r]&H_1(N)\ar[r]&\Gamma&\\
&\mbox{Tor}(H_1(N_\Gamma)) \ar[u]\ar[r]&\mbox{Tor}(H_1(N))\ar[u]\ar[ur]^\cong.}\]
It follows from
the commutativity, and from the exactness of the top horizontal sequence that the   map $\pi_{*}\colon
\mbox{Tor}(H_1(N_{\Gamma}))\to\mbox{Tor}(H_1(N))$, and hence
\[ (\pi_{*})^{*} \colon
\mbox{Ext}(H_{1}(N),\Z) \to \mbox{Ext}(H_{1}(N_{\Gamma}),\Z)\] is trivial. It now follows from the
naturality of the universal coefficient short exact sequence that the Euler class $e_{\Gamma} =
\pi^{*} e$ of the fibration $p_{\Gamma}\colon M_{\Gamma} \to N_{\Gamma}$ is zero, i.e. $M_\G$ is the product $S^1\times N_\G$.

Now assume that $M$ admits a symplectic structure. Then the manifold $S^1 \times N_{\Gamma}$ inherits a symplectic
structure $\pi^{*}\omega$ with canonical class $\pi^{*}K = 0$. It now follows from  \cite{FV08a} that $N_{\Gamma}$ is a torus bundle. A Thurston norm argument shows that $(N_{\Gamma},\phi_{\Gamma})$ is a torus bundle for any
$\phi_{\Gamma}\in H^1(N_{\Gamma};\Z)$.  Now let $\phi\in H^1(N;\Z)$ be any non-trivial element, which exists since $b_1(N)\geq 1$.
By the above $(N_{\Gamma},\pi^*(\phi))$ is a torus bundle and it is well-known that this implies that $(N,\phi)$ is a torus bundle.
When $e = 0$ obviously and when $b_1(N)>1$ with the same argument as in Theorem \ref{betti} we deduce that $M$ itself is a torus bundle over a torus.
 \end{proof}

 Note that in all cases a manifold $M$ as in the statement of Theorem \ref{tor} is finitely covered by a torus bundle over a torus.

As for the case of Theorem \ref{betti}, \cite{MS95} guarantees that the manifolds above are the only symplectic manifolds with Kodaira dimension zero.

We finish this note with one observation and one conjecture. The proof of Theorem \ref{lub} boils down, using the appropriate topological arguments, to the study of virtual Betti numbers of symplectic manifolds with $K = 0$. This approach, in various forms, appears in \cite{Bau08,Bow09,FV08a,Li06b}. From our viewpoint, the upper bounds on the virtual Betti number are a consequence, via dimensional reduction, of known constraints on the sum of the coefficients of Alexander polynomials, while in the approach of \cite{Bau08,Li06b} (that applies, differently from ours, to any $4$--manifold) it follows from properties of the Bauer-Furuta invariants. Overlapping the results available with both approaches, it seems sensible to conjecture that any symplectic $4$-manifold $M$ with $K = 0$ should satisfy $vb_1(M,\F_{p}) \leq 4$. Verifying this conjecture could give further evidence that the list of \cite{Li06} is complete.

%==========================================================

%=========================================================

\end{document}